\newtheorem{theorem}{Theorem}[subsection]
\newtheorem{lemma}[theorem]{Lemma}
\newtheorem{corollary}[theorem]{Corollary}
\theoremstyle{definition}
\numberwithin{equation}{section}
\numberwithin{theorem}{section}
\newcommand{\ep}{{\varepsilon}}
\newcommand{\C}{{\mathbb C}}
\newcommand{\Cs}{{$C^*$-al\-ge\-bra}}
\DeclareMathOperator{\cconv}{\overline{conv}}
\DeclareMathOperator{\re}{Re}
\DeclareMathOperator{\ra}{Rad}
\DeclareMathOperator{\wscconv}{\overline{conv}^{\mathit{w}\kern-.05em\raise.1ex\hbox{$\scriptscriptstyle *$}}\kern-.2em}
\DeclareMathOperator{\wcconv}{\overline{conv}^{\mathit{w}}\kern-.2em}
\date{}
\begin{document} 

\title{A new look at $C^*$-simplicity \\and the u\-ni\-que trace property of a group}

\author{Uffe Haagerup\footnote{Uffe Haagerup tragically passed away on July 5, 2015. The results in this article were proven by Haagerup in the early Spring of 2015, and privately communicated to his (and Magdalena Musat's) PhD student Kristian Knudsen Olesen in May 2015. Based on this, Kristian Knudsen Olesen, Magdalena Musat and Mikael R\o rdam (University of Copenhagen, Denmark) have written up this paper. }}

\maketitle

\begin{abstract} We characterize when the reduced \Cs{} of a non-trivial group has unique tracial state, respectively, is simple, in terms of Dixmier-type properties of the group $C^*$-algebra. We also give a simple proof of the recent result by Breuillard, Kalantar, Kennedy and Ozawa that the reduced \Cs{} of a group has unique tracial state if and only if the amenable radical of the group is trivial.
\end{abstract}

\section{Introduction}

\noindent  It was shown by Murray and von Neumann that the von Neumann algebra $\mathcal{L}(G)$ of a group $G$ is a factor (necessarily of type II$_1$) if and only if the group $G$ is ICC (all non-trivial conjugacy classes are infinite). The analogous problem for the reduced group \Cs{} $C^*_\lambda(G)$ of the group $G$, namely characterizing when $C^*_\lambda(G)$ is simple, respectively, when does it have a unique tracial state, turned out to be far more subtle. Powers, \cite{Pow75}, proved in 1975 that the reduced group \Cs{} of the free groups (or order $\ge 2$) is simple and has unique tracial state. His result has since then been vastly generalized. Prompted by the observation that (a) and (b) below separately imply condition (c), see, for example \cite{Harpe:BLMS}, the question arose if the following three conditions for a  group $G$ are equivalent:
\begin{itemize}
\item[(a)] $C^*_\lambda(G)$ is simple (i.e., $G$ is $C^*$-simple),
\item[(b)] $C^*_\lambda(G)$ has unique tracial state (i.e., $G$ has the unique trace property),
\item[(c)] the amenable radical of $G$ is trivial.
\end{itemize}
Kalantar and Kennedy proved in \cite{KalKen:boundary} that $C^*$-simplicity of a group is equivalent to the group having a (topologically) free boundary action. In 2014, Breuillard, Kalantar, Kennedy and Ozawa, \cite{BKKO}, used this to prove that (b) and (c) are equivalent, and hence that (a) implies (b). The picture of the interconnections between the three properties above was finally completed very recently by Le Boudec, \cite{Boudec:C*-simple}, who gave examples of groups that have the unique trace property, but  are not $C^*$-simple. Hence (b) does not imply (a). Simple \Cs s in general need not have unique tracial state. For example, any metrizable Choquet simplex arises as the trace simplex of a unital simple AF-algebra, \cite{Eff:AF}. 

In this article we give new characterizations of when a group has the unique trace property and when it is $C^*$-simple in terms of (intrinsic) Dixmier-type properties of the group \Cs. We also give a more direct proof of the theorem by Breuillard, Kalantar, Kennedy and Ozawa that the unique trace property is equivalent to triviality of the amenable radical of the group. This proof uses Furman's characterization (cf.\ \cite{Furman-2003}) of the amenable radical of a group as consisting of those elements that act trivially under any boundary action. 

In the very recent preprint \cite{Kennedy:C*-simple}, Kennedy has independently obtained results similar to those in Theorem~\ref{smpl} of this paper, characterizing when a group is $C^*$-simple.

\section{Boundary actions} \label{sec:2}

\noindent Recall that an action of a group\footnote{Throughout the paper, groups are assumed to be discrete.} $G$ on a compact Hausdorff space $X$ is said to be \emph{strongly proximal} if for each probability measure $\mu$ on $X$, the weak$^*$-closure of the orbit $G.\mu$ contains a point-mass $\delta_x$, for some $x \in X$. An action $G \curvearrowright X$ is said to be a \emph{boundary action} if it is strongly proximal and minimal. If this is the case, then for each $x \in X$ and  each probabiliby measure $\mu$ on $X$, there is a net $(s_i)$ in $G$ such that $s_i.\mu$ converges to $\delta_x$ in the weak$^*$-topology. Recall also that the \emph{amenable radical} of $G$ is defined to be the largest normal amenable subgroup of $G$, and it is denoted by $\ra(G)$. 

Furman, \cite{Furman-2003}, proved in 2003 the following result about (the existence of) boundary actions of a group:

\begin{theorem}[Furman] \label{thm:Furman}
Let $G$ be a group and let $t \in G$. Then $t \notin \ra(G)$ if and only if there is a boundary action of $G$ on some compact Hausdorff space $X$ such that $t$ acts non-trivially on $X$.
\end{theorem}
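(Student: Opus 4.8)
The plan is to route both implications through the \emph{Furstenberg boundary} $\partial_F G$, the universal minimal strongly proximal $G$-space, whose existence for any discrete group is classical and which enjoys the property that every boundary action of $G$ is a $G$-equivariant factor of $\partial_F G$. The whole statement reduces to identifying the kernel
\[
K := \ker(G \curvearrowright \partial_F G) = \{s \in G : s \text{ acts trivially on } \partial_F G\}
\]
with $\ra(G)$: indeed, if $t \notin \ra(G) = K$ then $t$ already acts non-trivially on the boundary $\partial_F G$, while conversely if $t$ acts non-trivially on \emph{some} boundary then $t \notin K = \ra(G)$ (since every boundary is a factor of $\partial_F G$, so a trivially-acting element of $K$ acts trivially on all of them). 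So the task becomes proving $K = \ra(G)$.

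For the inclusion $\ra(G) \subseteq K$ I would prove the more general fact that any normal amenable subgroup $N \trianglelefteq G$ acts trivially on every boundary action $G \curvearrowright X$. Since $N$ is amenable and acts affinely and continuously on the compact convex set of probability measures on $X$, there is an $N$-invariant measure $\mu$; by strong proximality and minimality there is a net $(s_i)$ in $G$ with $s_i.\mu \to \delta_x$. Normality gives $n.(s_i.\mu) = s_i.\big((s_i^{-1} n s_i).\mu\big) = s_i.\mu$ for $n \in N$, so in the limit $\delta_x$ is $N$-invariant, i.e.\ $N$ fixes $x$; conjugating and using density of $G.x$ (minimality) then forces $N$ to fix all of $X$. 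Taking $X = \partial_F G$ yields $\ra(G) \subseteq K$.

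The substantive direction is $K \subseteq \ra(G)$, and since $K$ is normal it suffices to show $K$ is \emph{amenable}. Here I would \emph{induce} a boundary of $K$ up to $G$. As $K \trianglelefteq G$, conjugation gives a homomorphism $G \to \mathrm{Aut}(K)$, and by the universal property of $\partial_F K$ each automorphism $c_g : k \mapsto gkg^{-1}$ lifts to a unique homeomorphism $\sigma_g$ of $\partial_F K$ with $\sigma_g(k.y) = (gkg^{-1}).\sigma_g(y)$; uniqueness makes $g \mapsto \sigma_g$ a $G$-action whose restriction to $K$ is exactly the boundary action (one checks $\sigma_{k_0} = k_0.(-)$). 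Because the $K$-action is already minimal and strongly proximal and $K \le G$, the induced $G$-action on $\partial_F K$ is again a boundary action. If $K$ were non-amenable, then $\partial_F K$ would be non-trivial and $K$ would act non-trivially on it, contradicting that $K$, acting trivially on $\partial_F G$, acts trivially on every $G$-boundary. Hence $K$ is amenable, giving $K \subseteq \ra(G)$ and, with the previous step, $K = \ra(G)$.

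The main obstacle sits entirely in this last direction, in two places. First, one must set up the induced $G$-action on $\partial_F K$ with care and verify genuinely that it is minimal and strongly proximal, so that the universal factoring property of $\partial_F G$ applies to it. Second, one must invoke cleanly the foundational equivalence that a group is amenable if and only if its Furstenberg boundary is a single point, used here in the form ``$K$ non-amenable $\Rightarrow$ $K$ acts non-trivially on $\partial_F K$.'' By contrast, the measure-theoretic limiting argument of the easy inclusion is routine, the only point requiring vigilance being the repeated use of normality.
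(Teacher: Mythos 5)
Your proposal addresses a statement the paper itself never proves: Theorem~\ref{thm:Furman} is quoted from Furman (\cite[Proposition 7]{Furman-2003}) and used as a black box, so the comparison can only be with the known proofs, not with an argument in the paper. Your argument is correct in substance and is essentially the standard topological-dynamics proof. The inclusion $\ra(G) \subseteq K$ (an invariant measure for the normal amenable subgroup, pushed to a point mass by strong proximality, then spread over a dense orbit using normality and minimality) is exactly right. The substantive inclusion $K \subseteq \ra(G)$, via extending $K \curvearrowright \partial_F K$ to a $G$-action, is also sound, but note that everything you need there --- existence of $\sigma_g$ (apply universality of $\partial_F K$ to the flow twisted by the automorphism $c_g$), invertibility of $\sigma_g$, multiplicativity of $g \mapsto \sigma_g$, and $\sigma_k = k.(-)$ for $k \in K$ --- hinges on one rigidity lemma that you use tacitly and should state and prove: a minimal compact $K$-space admits at most one $K$-equivariant continuous map into a given $K$-boundary (two such maps must agree somewhere, by strong proximality applied to the average of the two image point masses, and the agreement set is closed and invariant, hence everything by minimality). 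Beyond that, your proof is self-contained only modulo Furstenberg's classical theorem that a group is amenable if and only if its Furstenberg boundary is a point; the nonamenable direction of that theorem is a genuine result (via irreducible affine flows), but it predates and is independent of Furman's theorem, so there is no circularity. Finally, your route differs from the one taken in \cite{BKKO}, where amenability of the kernel of $G \curvearrowright \partial_F G$ is deduced from amenability of point stabilizers using $G$-injectivity of $C(\partial_F G)$ (Hamana's theory); your induction of $\partial_F K$ up to a $G$-boundary stays entirely within topological dynamics, which is more elementary, at the cost of the bookkeeping just described.
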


\noindent We denote by $\lambda$ the left-regular representation of the group $G$ on $\ell^2(G)$, and by $C^*_\lambda(G)$ the (associated) reduced group \Cs. A group $G$ is said to be \emph{$C^*$-simple} if $C^*_\lambda(G)$ is a simple \Cs, and it is said to have the \emph{unique trace property} if the canonical trace on $C^*_\lambda(G)$, here denoted by $\tau_0$, is the only tracial state on $C^*_\lambda(G)$.

Kalantar and Kennedy proved in  \cite{KalKen:boundary} that a group $G$ is $C^*$-simple if and only if it has a topologically free boundary action on some compact Hausdorff space. It was observed in \cite[Proposition 2.5]{BKKO} that the action of $G$ on its universal boundary $\partial_F G$ of $G$ is free if it is topologically free, and hence that the following theorem holds:

\begin{theorem}[Breuillard--Kalantar--Kennedy--Ozawa] \label{thm:KK}
Let $G$ be a group. Then $C^*_\lambda(G)$ is simple if and only if there is a free boundary action of $G$ on some compact Hausdorff space.
\end{theorem}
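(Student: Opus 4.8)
The plan is to deduce the statement directly from the two results quoted just above it: the Kalantar--Kennedy characterization and the observation in \cite[Proposition 2.5]{BKKO} that, on the universal boundary $\partial_F G$, topological freeness already implies freeness. The two implications are handled separately, and only the forward direction carries any real content.

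For the ``if'' direction, suppose $G$ admits a free boundary action on some compact Hausdorff space $X$. A free action is \emph{a fortiori} topologically free: if some $t \neq e$ had a fixed-point set with nonempty interior, it would in particular have a fixed point, contradicting freeness. Thus $G \curvearrowright X$ is a topologically free boundary action, and the Kalantar--Kennedy theorem immediately gives that $C^*_\lambda(G)$ is simple. This direction is essentially formal.

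For the ``only if'' direction, suppose $C^*_\lambda(G)$ is simple. By the Kalantar--Kennedy theorem there is a topologically free boundary action $G \curvearrowright X$. I would then pass to the universal boundary $\partial_F G$: by its universal property there is a $G$-equivariant continuous surjection $\pi \colon \partial_F G \to X$. The goal is to show that the action $G \curvearrowright \partial_F G$ is itself topologically free, for then \cite[Proposition 2.5]{BKKO} upgrades this to freeness, and since $\partial_F G$ is by construction a boundary, we obtain the desired free boundary action on a compact Hausdorff space.

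The main obstacle is precisely the transfer of topological freeness from $X$ up to $\partial_F G$ along $\pi$. The naive attempt -- take $t \neq e$ fixing a nonempty open set $U \subseteq \partial_F G$ pointwise and push forward to $\pi(U) \subseteq \mathrm{Fix}_X(t)$ -- fails, because $\pi(U)$ need not have nonempty interior, a continuous surjection not being an open map. To overcome this I would exploit the special structure of the universal boundary: $C(\partial_F G)$ is the injective envelope of $\C$, so $\partial_F G$ is extremally disconnected. One may then assume $U$ clopen (replace it by $\overline{U}$, which remains inside the closed set $\mathrm{Fix}(t)$) and combine the Stonean structure with minimality and strong proximality -- equivalently, with the almost-openness of the factor map $\pi$ -- to force $t$ to fix a set of nonempty interior in $X$, contradicting topological freeness there. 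In practice the cleanest route is to invoke the Kalantar--Kennedy theorem in its sharper form, where $C^*$-simplicity is equated directly with topological freeness of the action on $\partial_F G$; then the transfer step is absorbed into the cited result, and the only remaining ingredient is \cite[Proposition 2.5]{BKKO}.
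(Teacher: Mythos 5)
Your proposal is correct and follows essentially the same route as the paper, which states this theorem without a separate proof and derives it exactly as you do: by combining the Kalantar--Kennedy characterization from \cite{KalKen:boundary} with the observation of \cite[Proposition 2.5]{BKKO} that topological freeness of $G \curvearrowright \partial_F G$ upgrades to freeness. Your decision to invoke Kalantar--Kennedy in its sharper form ($C^*$-simplicity is equivalent to topological freeness of the action on $\partial_F G$ itself), rather than attempt a direct transfer of topological freeness from an arbitrary boundary $X$ up along the factor map, is precisely what the paper's one-line derivation implicitly relies on, and your identification of why the naive push-forward argument fails is a sound piece of due diligence.
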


\noindent

\section{Groups with the unique trace property}

\noindent In this section we give a new and elementary proof of one of the main theorems from \cite{BKKO}, namely that a group has the unique trace property if and only if it has trivial amenable radical. The proof uses Theorem~\ref{thm:Furman} by Furman, quoted above. 

When a group $G$ acts on a compact Hausdorff space $X$, we can then form the reduced crossed product \Cs{} $C(X) \rtimes_r G$, see \cite[Chapter 4]{BrownOzawa}, which in a natural way contains both $C^*_\lambda(G)$ and $C(X)$. These two subalgebras are related as follows: $\lambda(t) f \lambda(t)^* = t.f$, for all $t \in G$ and $f \in C(X)$, where $(t.f)(x) = f(t^{-1}.x)$, for $x \in X$. 

\begin{lemma} \label{lm:multiplicative}
Let $G$ be a group acting on a compact Hausdorff space $X$, let $x \in X$, and let $\varphi$ be a state on $C(X) \rtimes_r G$ whose restriction to $C(X)$ is the point-evaluation $\delta_x$. Then $\varphi(\lambda(t)) = 0$, for each $t \in G$ for which $t.x \ne x$. 
\end{lemma}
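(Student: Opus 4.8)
The plan is to exploit the two ingredients we are handed: the covariance relation $\lambda(t) f \lambda(t)^* = t.f$ for $f \in C(X)$, and the Cauchy--Schwarz inequality $|\varphi(b^*a)|^2 \le \varphi(a^*a)\,\varphi(b^*b)$, which is available because $\varphi$ is a state and hence a positive functional. The crucial point to keep in mind is that $\varphi$ is \emph{not} assumed multiplicative, so one cannot simply factor $\varphi(f\lambda(t))$ as $f(x)\,\varphi(\lambda(t))$; the entire argument consists of taming such cross terms by positivity, using the hypothesis $\varphi|_{C(X)} = \delta_x$ to make the relevant factors vanish.

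First I would introduce a cut-off. Fix a real-valued $f \in C(X)$ with $0 \le f \le 1$ and $f(x) = 1$, and note that $\varphi\big((1-f)\lambda(t)\big) = 0$: by Cauchy--Schwarz, $|\varphi((1-f)\lambda(t))|^2 \le \varphi((1-f)^2)\,\varphi(\lambda(t)^*\lambda(t))$, where the first factor equals $(1-f(x))^2 = 0$ (since $\varphi|_{C(X)} = \delta_x$) and $\lambda(t)^*\lambda(t) = 1$ as $\lambda(t)$ is unitary. Consequently $\varphi(\lambda(t)) = \varphi(f\lambda(t))$ for every such $f$.

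Next I would rewrite $f\lambda(t) = \lambda(t)\,(t^{-1}.f)$ — this is just the covariance relation rearranged, taking $g = t^{-1}.f$ in $\lambda(t)g\lambda(t)^* = t.g$ — and apply Cauchy--Schwarz a second time: $|\varphi(\lambda(t)\,(t^{-1}.f))|^2 \le \varphi(\lambda(t)\lambda(t)^*)\,\varphi((t^{-1}.f)^2)$. Here the first factor is $\varphi(1) = 1$ and the second, again by $\varphi|_{C(X)} = \delta_x$, equals $\big((t^{-1}.f)(x)\big)^2 = f(t.x)^2$. Combining the two steps yields the clean estimate $|\varphi(\lambda(t))| \le f(t.x)$, valid for every $f$ as above.

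Finally, since $t.x \ne x$ and $X$ is compact Hausdorff, hence normal, Urysohn's lemma supplies $f \in C(X)$ with $0 \le f \le 1$, $f(x) = 1$ and $f(t.x) = 0$; for this choice the estimate forces $\varphi(\lambda(t)) = 0$. I do not anticipate any real obstacle here: the argument is short once the two cut-off/Cauchy--Schwarz maneuvers are set up. The only things requiring care are bookkeeping — making sure that in each application of $|\varphi(b^*a)|^2 \le \varphi(a^*a)\varphi(b^*b)$ it is the $C(X)$-factor that is evaluated by $\delta_x$ (so that the vanishing factor lands in the right place), and repeatedly using that $\lambda(t)$ is unitary so that the $\lambda(t)$-factor contributes $\varphi(1)=1$.
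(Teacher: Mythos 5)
Your proof is correct. It reaches the same conclusion by the same underlying mechanism as the paper --- exploiting the covariance relation $\lambda(t)f = (t.f)\lambda(t)$ together with the fact that $\varphi$ restricts to the character $\delta_x$ on $C(X)$ --- but the execution differs in one genuine respect. The paper invokes the multiplicative domain theorem (citing \cite[Proposition 1.5.7]{BrownOzawa}): since $\varphi|_{C(X)} = \delta_x$ is multiplicative, $C(X)$ lies in the multiplicative domain of $\varphi$, which yields the exact identity
\begin{equation*}
\varphi(\lambda(t))\,f(x) \;=\; \varphi(\lambda(t)f) \;=\; \varphi\bigl((t.f)\lambda(t)\bigr) \;=\; f(t^{-1}.x)\,\varphi(\lambda(t))
\end{equation*}
for every $f \in C(X)$, from which the conclusion is immediate. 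You instead prove the two instances of this factorization that you actually need by hand, via two Cauchy--Schwarz estimates (first to replace $\lambda(t)$ by $f\lambda(t)$ for a cut-off $f$ peaking at $x$, then to bound $|\varphi(\lambda(t)(t^{-1}.f))|$ by $f(t.x)$), finishing with a Urysohn function separating $x$ from $t.x$. Both applications of Cauchy--Schwarz are set up correctly, with the $C(X)$-factor landing where $\delta_x$ kills it, and the conventions $(t^{-1}.f)(x) = f(t.x)$ match the paper's. What your route buys is self-containedness: you have effectively unpacked, in this special case, the Cauchy--Schwarz argument hiding inside the multiplicative domain theorem, so no external citation is needed; the cost is a slightly longer argument and an inequality (plus Urysohn) where the paper gets an exact identity valid for all $f$ at once.
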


\begin{proof} The assumptions in the lemma ensure that $C(X)$ is contained in the multiplicative domain of $\varphi$, see \cite[Proposition 1.5.7]{BrownOzawa}, so 
$$\varphi(\lambda(t)) f(x) = \varphi(\lambda(t) f) = \varphi((t.f) \lambda(t)) = f(t^{-1}.x) \varphi(\lambda(t)),$$
for each $f \in C(X)$ and each $t \in G$. This clearly entails that $\varphi(\lambda(t)) = 0$ when $t^{-1}.x \ne x$, which again happens precisely when $t.x \ne x$. 
\end{proof}

\noindent If $\varphi$ is a state on $C^*_\lambda(G)$, or on the crossed product $C(X) \rtimes_r G$, and if $t \in G$,  let $t.\varphi$ denote the state given by $(t.\varphi)(a) = \varphi(\lambda(t)a\lambda(t)^*)$, where $a$ belongs to $C^*_\lambda(G)$, respectively, to $C(X) \rtimes_r G$.

\begin{lemma} \label{lm:extension}
Let $\tau$ be a tracial state on $C^*_\lambda(G)$, let $G \curvearrowright X$ be a boundary action, and let $x \in X$. Then $\tau$ extends to a state on $C(X) \rtimes_r G$ whose restriction to $C(X)$ is point-evaluation $\delta_x$.
\end{lemma}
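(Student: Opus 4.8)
The plan is to first extend $\tau$ to the crossed product in some arbitrary way, and then use the group action to drag the resulting measure on $C(X)$ towards the point mass at $x$, exploiting that a \emph{tracial} state is invariant under conjugation by the unitaries $\lambda(t)$.

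First I would invoke the Hahn--Banach theorem in its state-extension form: since $C^*_\lambda(G)$ is a unital subalgebra of $C(X) \rtimes_r G$ sharing the same unit, the state $\tau$ extends to a state $\psi$ on $C(X) \rtimes_r G$. The restriction $\psi|_{C(X)}$ is then given by integration against some probability (Radon) measure $\mu$ on $X$, but a priori $\mu$ has nothing to do with $\delta_x$.

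The key observation is that, because $\lambda(t)\lambda(s)\lambda(t)^* = \lambda(tst^{-1})$, the subalgebra $C^*_\lambda(G)$ is globally invariant under each inner automorphism $\Ad \lambda(t)$, and $\tau$ is \emph{fixed} by it: $(t.\tau)(a) = \tau(\lambda(t) a \lambda(t)^*) = \tau(a)$ for all $a \in C^*_\lambda(G)$, precisely by the trace property. Consequently every conjugate $t.\psi$ still restricts to $\tau$ on $C^*_\lambda(G)$, while (using $\lambda(t) f \lambda(t)^* = t.f$) its restriction to $C(X)$ is a translate of $\mu$ by the action of $t$. Since $G \curvearrowright X$ is a boundary action, hence strongly proximal, for the given $x \in X$ there is a net $(t_i)$ in $G$ whose associated translates of $\mu$ converge weak$^*$ to $\delta_x$; I would then set $\varphi_i := t_i.\psi$, so that each $\varphi_i$ restricts to $\tau$ on $C^*_\lambda(G)$ and $\varphi_i|_{C(X)} \to \delta_x$ weak$^*$.

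Finally, by weak$^*$-compactness of the state space of $C(X) \rtimes_r G$, the net $(\varphi_i)$ has a weak$^*$-convergent subnet with some limit state $\varphi$. Restriction maps being weak$^*$-continuous, I conclude $\varphi|_{C^*_\lambda(G)} = \lim \varphi_i|_{C^*_\lambda(G)} = \tau$ and $\varphi|_{C(X)} = \lim \varphi_i|_{C(X)} = \delta_x$, which is exactly the asserted extension. The genuinely substantive step is the trace-invariance, which is what makes the translation harmless on the $C^*_\lambda(G)$ part; the only point demanding real care is the bookkeeping of the translation action on measures, so that the correct convention is used and strong proximality delivers a net actually pushing $\mu$ onto $\delta_x$ (rather than onto some other point mass).
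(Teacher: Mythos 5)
Your proposal is correct and takes essentially the same route as the paper: extend $\tau$ arbitrarily to a state $\psi$ on $C(X) \rtimes_r G$, use the boundary-action property to find a net $(t_i)$ pushing $\psi|_{C(X)}$ to $\delta_x$, pass to a weak$^*$-convergent subnet of $(t_i.\psi)$, and use traciality to see that the conjugation does not disturb the restriction to $C^*_\lambda(G)$. The only cosmetic difference is that you verify $t.\psi|_{C^*_\lambda(G)} = \tau$ exactly (via $\Ad\lambda(t)$-invariance of the subalgebra and the trace property), whereas the paper checks agreement only on the generators $\lambda(s)$ and concludes by density.
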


\begin{proof} Extend $\tau$ to any state $\psi$ on $C(X) \rtimes_r G$, and let $\rho$ be the restriction of $\psi$ to $C(X)$. By the assumption that $G \curvearrowright X$ is a boundary action, there is a net $(s_i)$ in $G$ such that $s_i.\rho$ converges to $\delta_x$ in the weak$^*$-topology. By possibly passing to a subnet we may assume that $s_i.\psi$ converges to some state $\varphi$ on $C(X) \rtimes_r G$. The restriction of $\varphi$ to $C(X)$ is then equal to $\delta_x$. Moreover, since for all $s,t \in G$,
$$(s.\psi)(\lambda(t)) = \psi(\lambda(sts^{-1})) = \tau(\lambda(sts^{-1})) = \tau(\lambda(t)),$$
we see that $\varphi(\lambda(t)) = \tau(\lambda(t))$, for all $t \in G$. This shows that $\varphi$ and $\tau$ agree on $C^*_\lambda(G)$. 
\end{proof}

\noindent  We can now give a new and simpler proof of one of the main theorems from \cite{BKKO}:

\begin{theorem}[Breuillard--Kalantar--Kennedy--Ozawa] \label{thm:BKKO}
Let $G$ be a group and let $t \in G$. Then $\tau(\lambda(t)) = 0$, for every tracial state $\tau$ on $C^*_\lambda(G)$, if and only if $t \notin \ra(G)$. 
In particular, $C^*_\lambda(G)$ has a unique tracial state if and only if $\ra(G)$ is trivial. 
\end{theorem}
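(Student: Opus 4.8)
The plan is to prove the displayed biconditional for a fixed $t\in G$ and then deduce the uniqueness assertion by comparison with the canonical trace $\tau_0$. For the implication $t\notin\ra(G)\Rightarrow\tau(\lambda(t))=0$, I would simply chain together the three ingredients already in place. By Furman's Theorem~\ref{thm:Furman} there is a boundary action $G\curvearrowright X$ and a point $x\in X$ with $t.x\ne x$. Given any tracial state $\tau$ on $C^*_\lambda(G)$, Lemma~\ref{lm:extension} extends it to a state $\varphi$ on $C(X)\rtimes_r G$ that restricts to $\delta_x$ on $C(X)$ and agrees with $\tau$ on $C^*_\lambda(G)$; Lemma~\ref{lm:multiplicative} then forces $\varphi(\lambda(t))=0$, so $\tau(\lambda(t))=\varphi(\lambda(t))=0$. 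The trace hypothesis is used only to apply Lemma~\ref{lm:extension}, and the conclusion holds for every tracial state at once.

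For the converse I would argue by contraposition: assuming $t\in\ra(G)$, I construct one tracial state $\tau$ with $\tau(\lambda(t))\ne0$. Write $N=\ra(G)$, an amenable normal subgroup, and choose a Følner sequence $(F_n)$ in $N$. Put $\xi_n=|F_n|^{-1/2}\sum_{g\in F_n}\delta_g\in\ell^2(N)\subseteq\ell^2(G)$ and let $\omega_n$ be the associated vector state $\omega_n(a)=\langle a\xi_n,\xi_n\rangle$ on $C^*_\lambda(G)$. A direct computation gives $\omega_n(\lambda(u))=|F_n|^{-1}\,|F_n\cap u^{-1}F_n|$, hence $\omega_n(\lambda(u))\to1$ for $u\in N$ (Følner property) and $\omega_n(\lambda(u))=0$ for $u\notin N$ (since then $u^{-1}N\cap N=\varnothing$). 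As the $\omega_n$ are uniformly bounded and converge on the dense span of the $\lambda(u)$, they converge weak$^*$ to a state $\tau$ with $\tau(\lambda(u))=\mathbf 1_N(u)$; in particular $\tau(\lambda(t))=1\ne0$.

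The one point requiring care — and the step I would treat as the main obstacle — is checking that this limit is genuinely a \emph{tracial} state on the \emph{reduced} algebra. That $\tau$ is a state on $C^*_\lambda(G)$, rather than only on the full group algebra, is automatic precisely because each $\omega_n$ is a vector state of the left-regular representation; this is exactly what the Følner realization buys us, and it is why one cannot simply pull back a positive-definite function from $G/N$. For the trace property it suffices, by linearity and continuity on products $\lambda(g)\lambda(h)$, to verify that $u\mapsto\tau(\lambda(u))=\mathbf 1_N(u)$ is constant on conjugacy classes; this is where normality of $N=\ra(G)$ is essential, as $u\in N\iff gug^{-1}\in N$.

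Finally, the ``in particular'' statement follows by comparison with $\tau_0$, which satisfies $\tau_0(\lambda(u))=0$ for all $u\ne e$. If $\ra(G)=\{e\}$, then every $u\ne e$ lies outside $\ra(G)$, so by the biconditional every tracial state agrees with $\tau_0$ on all $\lambda(u)$, hence equals $\tau_0$ by density; conversely, if $\ra(G)\ne\{e\}$, the trace constructed above for some $t\in\ra(G)\setminus\{e\}$ has $\tau(\lambda(t))=1\ne0=\tau_0(\lambda(t))$, so $\tau_0$ is not unique.
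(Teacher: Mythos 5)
Your proof of the forward implication ($t \notin \ra(G) \Rightarrow \tau(\lambda(t)) = 0$) is exactly the paper's argument: Furman's Theorem~\ref{thm:Furman}, then Lemma~\ref{lm:extension}, then Lemma~\ref{lm:multiplicative}. Where you genuinely depart from the paper is in the converse. The paper handles it in one stroke by citing de la Harpe's result that for a normal amenable subgroup $N$ the canonical map $\C G \to \C (G/N)$ extends to a $^*$-homomorphism $C^*_\lambda(G) \to C^*_\lambda(G/N)$, and then composes with the canonical trace on $C^*_\lambda(G/\ra(G))$; the resulting trace is precisely $\tau(\lambda(u)) = \mathbf{1}_{\ra(G)}(u)$. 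You construct this same trace by hand: vector states $\omega_n$ along a F{\o}lner sequence of $N = \ra(G)$, the computation $\omega_n(\lambda(u)) = |F_n|^{-1}\,|F_n \cap u^{-1}F_n|$, vanishing off $N$ by disjointness of cosets, a weak$^*$ limit obtained from uniform boundedness plus convergence on the dense span of the $\lambda(u)$'s, and traciality from the fact that $\mathbf{1}_N$ is a class function (normality of $N$), extended to all of $C^*_\lambda(G)$ by bilinearity and norm-density. All of these steps are sound, so your proof is correct; its merit is that it is self-contained, in effect reproving the special case of de la Harpe's cited result that is actually needed (that $\mathbf{1}_N$ defines a tracial state on the \emph{reduced} algebra when $N$ is normal and amenable), and your remark about why amenability is indispensable here is exactly the right point. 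Two minor touch-ups: since the paper's groups are discrete but not assumed countable, you should use a F{\o}lner \emph{net} rather than a sequence; and in the traciality step, spell out that $gh$ and $hg$ are conjugate, so the class-function property gives $\tau(\lambda(g)\lambda(h)) = \tau(\lambda(h)\lambda(g))$, from which $\tau(ab)=\tau(ba)$ follows for all $a,b$ by density and continuity of multiplication on bounded sets. Your treatment of the ``in particular'' clause agrees with the paper's.
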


\begin{proof} Suppose first that $t \notin \ra(G)$. Then by Theorem~\ref{thm:Furman}  (Furman), there is a boundary action $G \curvearrowright X$ such that $t.x \ne x$, for some $x \in X$. Let $\tau$ be a tracial state on $C^*_\lambda(G)$. By Lemma~\ref{lm:extension}, there is a state $\varphi$ on $C(X) \rtimes_r G$ which extends $\tau$ and whose restriction to $C(X)$ is point-evaluation at $x$. By Lemma~\ref{lm:multiplicative}, it follows that $\tau(\lambda(t)) = \varphi(\lambda(t)) = 0$.

The ``only if'' part follows from the well-known fact, see, for example, Proposition 3 in \cite{Harpe:BLMS} (and its proof therein), that whenever $N$ is a normal amenable subgroup of $G$, then the canonical homomorphism $\C G \to \C (G/N)$ extends to a $^*$-homomorphism $C_\lambda^*(G) \to C_\lambda^*(G/N)$. Using this fact with $N=\ra(G)$, and composing the resulting $^*$-homomorphism   $C_\lambda^*(G) \to C_\lambda^*(G/\ra(G))$ with the canonical trace on $C^*_\lambda(G/\ra(G))$, we obtain a tracial state $\tau$ on $C^*_\lambda(G)$ which satisfies $\tau(\lambda(t)) = 1$, for all $t \in \ra(G)$. 

The last claim of the theorem follows from the fact that the canonical trace on $C^*_\lambda(G)$ is the unique tracial state which vanishes on $\lambda(t)$, for all $t \ne e$. 
\end{proof}

\section{$C^*$-simplicity and the u\-ni\-que trace property of groups}

\noindent This section contains our main results that provide new characterizations of the unique trace property and $C^*$-simplicity of a group in terms of Dixmier-type properties of the group \Cs.

The lemma below is well-known, see for example \cite[Lemma 2.1(c)]{Ol-Osin}. We include a proof for completeness.

\begin{lemma} \label{lm:cone}
Let $x, y \in C^*_\lambda(G)$ be finite positive linear combinations of elements from $\{\lambda(t) \mid t \in G\}$. Then $\|x+y\| \ge \|x\|$.
\end{lemma}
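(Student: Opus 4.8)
The plan is to work in the defining representation on $\ell^2(G)$ and to exploit that an element with nonnegative coefficients maps nonnegative functions to nonnegative functions. Write $x = \sum_t a_t \lambda(t)$ and $y = \sum_t b_t \lambda(t)$, where $a_t, b_t \ge 0$ and only finitely many are nonzero. Viewing $\xi \in \ell^2(G)$ as a function $g \mapsto \xi(g)$, one has $(x\xi)(g) = \sum_t a_t\, \xi(t^{-1}g)$, and similarly for $y$. Since $\|x\| = \sup\{\|x\xi\| : \|\xi\|=1\}$, it suffices to control $\|x\xi\|$ and $\|(x+y)\xi\|$ simultaneously on a good family of test vectors.

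First I would reduce to \emph{nonnegative} test vectors. For arbitrary $\xi$, let $|\xi|$ denote its pointwise modulus; then $\||\xi|\| = \|\xi\|$, and because every $a_t \ge 0$, the triangle inequality yields, pointwise in $g$, the inequality $(x|\xi|)(g) = \sum_t a_t\,|\xi(t^{-1}g)| \ge \big|\sum_t a_t\,\xi(t^{-1}g)\big| = |(x\xi)(g)|$. Squaring and summing over $g$ gives $\|x|\xi|\| \ge \|x\xi\|$, so the supremum defining $\|x\|$ is already attained (in the limit) over unit vectors $\xi \ge 0$; that is, $\|x\| = \sup\{\|x\xi\| : \|\xi\|=1,\ \xi \ge 0\}$.

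The second step is an immediate monotonicity observation. If $\xi \ge 0$ pointwise, then both $x\xi$ and $y\xi$ are pointwise nonnegative, again because all coefficients are nonnegative; hence $(x+y)\xi = x\xi + y\xi \ge x\xi \ge 0$ pointwise, and therefore $\|(x+y)\xi\| \ge \|x\xi\|$. Choosing a sequence of nonnegative unit vectors $\xi_n$ with $\|x\xi_n\| \to \|x\|$, I would conclude $\|x+y\| \ge \|(x+y)\xi_n\| \ge \|x\xi_n\| \to \|x\|$, which is the desired inequality.

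The only point that requires care is the reduction in the first step, namely that passing from $\xi$ to $|\xi|$ can only increase $\|x\xi\|$; this is where the nonnegativity of the coefficients is genuinely used. Once this modulus trick is established, the remainder is a routine pointwise comparison, so I expect the main (and essentially sole) obstacle to lie in cleanly justifying that one may restrict to nonnegative test vectors.
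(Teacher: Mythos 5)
Your proof is correct and follows essentially the same route as the paper: both arguments work in the regular representation, use the modulus trick to show that the norm of an element with nonnegative coefficients can be computed over the positive cone of $\ell^2(G)$, and then conclude by pointwise monotonicity. The only cosmetic difference is that you test the norm with $\|x\xi\|$ for a single nonnegative vector $\xi$, whereas the paper tests with the matrix coefficients $\langle x\xi,\eta\rangle$ over pairs of vectors in the cone; the underlying idea is identical.
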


\begin{proof} Let $\ell^2(G)_+$ denote the ``positive cone'' consisting of all vectors $\xi \in \ell^2(G)$ for which $\langle \xi, e_t \rangle \ge 0$, for all $t \in G$. Here $(e_t)_{t \in G}$ denotes the standard orthonormal basis of $\ell^2(G)$. It is clear that $\langle \xi, \eta \rangle \ge 0$, for all $\xi, \eta \in \ell^2(G)_+$. Moreover, each element $z \in C^*_\lambda(G)$ which is a finite positive linear combination of elements from $\{\lambda(t) \mid t \in G\}$ maps $\ell^2(G)_+$ into $\ell^2(G)_+$, and satisfies $|\langle z \xi, \eta \rangle | \le \langle z |\xi|, |\eta| \rangle$, from which it follows that
\begin{equation*} 
\|z\| = \sup \{ \langle z \xi, \eta \rangle \mid \xi, \eta \in \ell^2(G)_+, \;  \|\xi\| \le 1, \; \|\eta\| \le 1\}.
\end{equation*}
The conclusion follows now easily.
\end{proof}

\begin{lemma} \label{sfc}
    Let $G$ be a group and let $t \in G$.  The following conditions are equivalent:
\begin{enumerate}
\item $0 \notin \cconv\{\lambda(sts^{-1}) \mid s \in G \}$, 
\item $0 \notin \cconv\big\{\lambda(sts^{-1}) + \lambda(sts^{-1})^* \mid s \in G \big\}$,
\item there exist a self-adjoint linear functional~$\omega$ on $C^*_\lambda(G)$ of norm~1 and a constant $c>0$ such that $\re \omega(\lambda(sts^{-1})) \geq c$, for all $s\in G$.
\end{enumerate}
\end{lemma}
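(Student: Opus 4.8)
The plan is to establish the cycle of implications $(i) \Rightarrow (ii) \Rightarrow (iii) \Rightarrow (i)$. Two of these steps are soft consequences of the continuity of bounded linear functionals, while the genuinely non-trivial passage, namely moving between the one-sided convex hull in (i) and the self-adjoint convex hull in (ii), is exactly what Lemma~\ref{lm:cone} is designed to supply. Throughout I abbreviate $u_s = \lambda(sts^{-1})$, and I record that every convex combination $a = \sum_i \mu_i u_{s_i}$ (with $\mu_i \ge 0$, $\sum_i \mu_i = 1$) is a finite positive linear combination of elements of $\{\lambda(t) \mid t \in G\}$, and so is its adjoint $a^* = \sum_i \mu_i \lambda(s_i t^{-1} s_i^{-1})$.

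For $(i) \Rightarrow (ii)$ I argue by contraposition. Suppose $0 \in \cconv\{u_s + u_s^* \mid s \in G\}$, so that there are convex combinations $a_n + a_n^*$, with $a_n \in \conv\{u_s \mid s \in G\}$, converging to $0$ in norm. Applying Lemma~\ref{lm:cone} with $x = a_n$ and $y = a_n^*$ yields $\|a_n\| \le \|a_n + a_n^*\| \to 0$, whence $0 \in \cconv\{u_s \mid s \in G\}$, which is the negation of (i). This is the one place where Lemma~\ref{lm:cone} enters, and it is the crux of the equivalence.

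The step $(ii) \Rightarrow (iii)$ is where self-adjointness of the functional must be manufactured. Since the elements $u_s + u_s^*$ all lie in the real Banach space $C^*_\lambda(G)_{\mathrm{sa}}$ of self-adjoint elements, and $0$ is not in their closed convex hull, the Hahn--Banach separation theorem applied in $C^*_\lambda(G)_{\mathrm{sa}}$ furnishes a bounded real-linear functional $\ell$ and a constant $c' > 0$ with $\ell(u_s + u_s^*) \ge c'$ for all $s$. I then extend $\ell$ to a functional $\omega$ on $C^*_\lambda(G)$ by the standard recipe $\omega(a) = \ell\big(\tfrac12(a + a^*)\big) + i\,\ell\big(\tfrac{1}{2i}(a - a^*)\big)$; a direct check shows that $\omega$ is complex-linear, bounded, self-adjoint, and restricts to $\ell$ on $C^*_\lambda(G)_{\mathrm{sa}}$. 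Self-adjointness then gives $\omega(u_s + u_s^*) = \omega(u_s) + \overline{\omega(u_s)} = 2\,\re\,\omega(u_s)$, so $\re\,\omega(u_s) \ge c'/2$ for all $s$; rescaling $\omega$ to norm $1$ (and $c'$ accordingly) produces the functional required in (iii).

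Finally, $(iii) \Rightarrow (i)$ is immediate: if $\omega$ is as in (iii), then $\re\,\omega$ is a continuous real-linear functional with $\re\,\omega(u_s) \ge c$ for every $s$, hence $\re\,\omega \ge c > 0$ on the whole of $\cconv\{u_s \mid s \in G\}$ by continuity, so this set cannot contain $0$. The step demanding the most care is $(ii) \Rightarrow (iii)$, specifically verifying that the extension $\omega$ is genuinely complex-linear, bounded, and self-adjoint; the remaining ingredients are the routine separation argument and the single application of Lemma~\ref{lm:cone}.
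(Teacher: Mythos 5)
Your proof is correct and takes essentially the same route as the paper: Lemma~\ref{lm:cone} to pass between (i) and (ii), Hahn--Banach separation in the real Banach space of self-adjoint elements for (ii) $\Rightarrow$ (iii), and the easy observation that a separating functional rules out $0$ for (iii) $\Rightarrow$ (i). The only, inessential, difference is organizational: you close the implications as a cycle (i) $\Rightarrow$ (ii) $\Rightarrow$ (iii) $\Rightarrow$ (i), so you never need the triangle-inequality estimate $\|a+a^*\| \le 2\|a\|$ by which the paper obtains (ii) $\Rightarrow$ (i) directly.
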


\begin{proof} For all $c_1, \ldots, c_n \geq 0$ with $\sum_{k=1}^n c_k = 1$, and all $s_1,
    \ldots, s_n \in G$ we have
        \begin{equation*}
        \Bigl\| \sum_{k=1}^n c_k \lambda(s_k t s_k^{-1}) \Bigr\|
            \leq \Bigl\| \sum_{k=1}^n c_k \big(\lambda(s_k t s_k^{-1}) +
                \lambda(s_k t s_k^{-1})^*\big) \Bigr\|
            \leq 2\Bigl\| \sum_{k=1}^n c_k \lambda(s_k t s_k^{-1})
                \Bigr\|.
        \end{equation*}
The first inequality holds by Lemma~\ref{lm:cone}, and the second by the triangle inequality. Together, the two inequalities show that (i) and (ii) are equivalent. 

The fact that (ii) implies (iii) follows from a standard application of the Hahn--Banach separation theorem on the real vector space of  self-adjoint elements in $C^*_\lambda(G)$, while it is clear that (iii) implies (i).
\end{proof}

\noindent The theorem below sharpens the first part of Theorem~\ref{thm:BKKO}.

\begin{theorem} \label{thm:2}
    Let $G$ be a group and let $t \in G$. Then $t \notin \ra(G)$ if and only if
\begin{equation} \tag{$\dagger$}
0 \in \cconv\{ \lambda(sts^{-1}) \mid s \in G \}.
\end{equation}
\end{theorem}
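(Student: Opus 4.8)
The plan is to prove the two implications separately, using Theorem~\ref{thm:BKKO} for the easy direction and a weak$^*$-compactness/strong-proximality argument, built on Lemma~\ref{sfc} and modelled on Lemma~\ref{lm:extension}, for the other. For the implication $(\dagger) \Rightarrow t \notin \ra(G)$, I would first note that any tracial state $\tau$ on $C^*_\lambda(G)$ takes the constant value $\tau(\lambda(t))$ on the set $\{\lambda(sts^{-1}) \mid s \in G\}$, by the trace property, and hence the same constant value on its closed convex hull. If $0$ lies in that hull, then $\tau(\lambda(t)) = \tau(0) = 0$. As this holds for every tracial state, Theorem~\ref{thm:BKKO} yields $t \notin \ra(G)$.

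The substance is the converse. Assuming $t \notin \ra(G)$, I would argue by contradiction: if $(\dagger)$ fails, then Lemma~\ref{sfc} (the equivalence of~(i) and~(iii)) supplies a self-adjoint functional $\omega$ on $C^*_\lambda(G)$ with $\|\omega\| = 1$ and a constant $c > 0$ such that $\re\omega(\lambda(sts^{-1})) \ge c$ for all $s \in G$. Since $t \notin \ra(G)$, Theorem~\ref{thm:Furman} provides a boundary action $G \curvearrowright X$ and a point $x_0 \in X$ with $t.x_0 \ne x_0$. I would then extend $\omega$, via Hahn--Banach together with a symmetrization $a \mapsto \tfrac12(\Omega(a)+\overline{\Omega(a^*)})$, to a self-adjoint norm-one functional $\Omega$ on $C(X)\rtimes_r G$, and take its Jordan decomposition $\Omega = \Omega_+ - \Omega_-$ into positive functionals with $\|\Omega_+\| + \|\Omega_-\| = 1$. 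The restriction $\rho := (\Omega_+ + \Omega_-)|_{C(X)}$ is then a probability measure on $X$.

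Now I would run the limiting argument of Lemma~\ref{lm:extension}. By strong proximality there is a net $(s_i)$ with $s_i.\rho \to \delta_{x_0}$ weak$^*$; passing to a subnet I may assume $s_i.\Omega$, $s_i.\Omega_+$, $s_i.\Omega_-$ all converge weak$^*$, to $\Omega'$, $\Omega'_+$, $\Omega'_-$ with $\Omega' = \Omega'_+ - \Omega'_-$. Because $s_i.\Omega_\pm|_{C(X)} \le s_i.\rho \to \delta_{x_0}$, each limit $\Omega'_\pm$ restricts on $C(X)$ to a nonnegative multiple of $\delta_{x_0}$, the multiple being $\|\Omega_\pm\|$ since total mass is preserved under the action. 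Hence each nonzero $\Omega'_\pm$, suitably normalized, is a state whose restriction to $C(X)$ is exactly $\delta_{x_0}$, so Lemma~\ref{lm:multiplicative} (using $t.x_0 \ne x_0$) forces $\Omega'_\pm(\lambda(t)) = 0$ and therefore $\Omega'(\lambda(t)) = 0$. On the other hand $(s_i.\Omega)(\lambda(t)) = \omega(\lambda(s_i t s_i^{-1}))$ has real part $\ge c$ for every $i$, giving $\re\Omega'(\lambda(t)) \ge c > 0$ in the limit, a contradiction.

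I expect the main obstacle to be the passage from the self-adjoint functional $\omega$ to honest \emph{states} on the crossed product to which Lemma~\ref{lm:multiplicative} applies: strong proximality is a statement about \emph{probability} measures, so one must split $\Omega$ through its Jordan decomposition and verify that \emph{both} positive parts concentrate at $x_0$ along the \emph{same} averaging net. The domination $s_i.\Omega_\pm|_{C(X)} \le s_i.\rho$ combined with weak$^*$ compactness is exactly what makes this go through, and a little care is needed in the degenerate cases $\|\Omega_+\| = 0$ or $\|\Omega_-\| = 0$, where the corresponding part simply vanishes and the argument applies to the other.
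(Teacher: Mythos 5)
Your proof is correct and follows essentially the same route as the paper: the easy direction via Theorem~\ref{thm:BKKO}, and the converse by contradiction using Lemma~\ref{sfc}, Furman's theorem, a Jordan decomposition pushed to $C(X)\rtimes_r G$, the strong-proximality net, and Lemma~\ref{lm:multiplicative}. The only (inessential) difference is the order of operations: the paper Jordan-decomposes $\omega = \omega_+ - \omega_-$ on $C^*_\lambda(G)$ and then extends each positive part norm-preservingly to the crossed product, whereas you extend $\omega$ first (via Hahn--Banach plus symmetrization) and decompose the extension there; both yield the same pair of positive functionals dominated in sum by a state, and your domination argument identifying the limit restrictions as multiples of $\delta_{x_0}$ is just the measure-theoretic phrasing of the paper's appeal to purity of $\delta_x$.
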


\begin{proof}  If ($\dagger$) holds, then $\tau(\lambda(t)) = 0$, for every tracial state $\tau$ on $C^*_\lambda(G)$, whence $t \notin \ra(G)$ by Theorem~\ref{thm:BKKO}.

For the converse implication, suppose that ($\dagger$) does not hold, and assume to reach a
    contradiction that $t \notin \ra(G)$. By Theorem~\ref{thm:Furman} (Furman), there is a boundary action $G \curvearrowright X$ and some $x \in X$ such that $t.x \ne x$. 

    By Lemma~\ref{sfc}, there exist a self-adjoint linear functional~$\omega$
    on $C^*_\lambda(G)$ of norm~1 and a constant~$c>0$ such that $\re
    \omega(\lambda(sts^{-1})) \geq c$, for all $s\in G$.  Let $\omega = \omega_+
    - \omega_-$  be the Jordan decomposition of $\omega$, where
   $\omega_+$ and~$\omega_-$ are positive linear functionals with $\|\omega\| = \|\omega_+\| + \| \omega_-\|$.  Observe that $\omega_+ + \omega_-$ is a state, because $\|\omega\|=1$. 

Further, extend $\omega_\pm$ to positive linear functionals $\psi_\pm$ on $C(X) \rtimes_r G$ with $\|\psi_\pm\| = \|\omega_\pm\|$. Then $\psi_+ + \psi_-$ is a state on $C(X) \rtimes_r G$ which extends the state $\omega_+ + \omega_-$, and, moreover,  $\psi_+ - \psi_-$ is a self-adjoint linear functional which extends $\omega$. 

Let $\rho$ be the restriction of $\psi_++\psi_-$ to $C(X)$. As in the proof of Lemma~\ref{lm:extension}, since $G \curvearrowright X$ is a boundary action, there is a net $(s_i)$ in $G$ such that $s_i.\rho$ converges to the point-mass $\delta_x$ in the weak$^*$-topology. Upon possibly passing to a subnet, we can assume that $s_i.\psi_\pm$ converge in the weak$^*$-topology to positive functionals $\varphi_\pm$ on $C(X) \rtimes_r G$ (necessarily with the same norms as  $\psi_\pm$). The restriction of $\varphi_+ + \varphi_-$ to $C(X)$ is equal to $\delta_x$, which is a pure state on $C(X)$, so the restriction of $\varphi_\pm$ to $C(X)$ must be equal to $\|\varphi_\pm\| \! \cdot \! \delta_x$. We can now use Lemma~\ref{lm:multiplicative} (applied to suitable multiples of the positive linear functionals $\varphi_\pm$) to conclude that $\varphi_\pm(\lambda(t)) = 0$. Hence
\begin{eqnarray*}
0 & = & \varphi_+(\lambda(t)) -\varphi_-(\lambda(t)) \; = \; {\textstyle{\lim_{i}}} \big(s_i.\psi_+(\lambda(t)) - s_i.\psi_-(\lambda(t))\big) \\ &=& {\textstyle{\lim_{i}}} \big(s_i.\omega_+(\lambda(t)) - s_i.\omega_-(\lambda(t))\big)  \; =\;  {\textstyle{\lim_{i}}} \, s_i.\omega(\lambda(t)) \; = \; {\textstyle{\lim_i}} \, \omega(\lambda(s_its_i^{-1})),
\end{eqnarray*}
which  contradicts the fact that $\re   \omega(\lambda(sts^{-1})) \geq c > 0$, for all $s\in G$. 
\end{proof}

\noindent It is well-known that groups with trivial amenable radical are ICC. This fact also follows from Theorem~\ref{thm:2},  since  ($\dagger$) can only hold for elements $t \in G$ belonging to an infinite conjugacy class.

From Theorem~\ref{thm:2} and Theorem~\ref{thm:BKKO} we obtain the following:

\begin{corollary} \label{cor:unique_trace-2}
    Let $G$ be a group.  Then $C^*_\lambda(G)$ has a unique
    tracial state if and only if
    \begin{equation*}
        0 \in \cconv\{ \lambda(sts^{-1}) \mid s \in G \},
        \end{equation*}
    for all $t \in G \setminus \{e\}$.
    \end{corollary}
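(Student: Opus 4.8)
The plan is to derive Corollary~\ref{cor:unique_trace-2} directly by combining the two preceding theorems, which have already done all the substantive work. First I would observe that Theorem~\ref{thm:2} gives, for each fixed $t \in G$, the equivalence between the condition $t \notin \ra(G)$ and the membership ($\dagger$), namely $0 \in \cconv\{\lambda(sts^{-1}) \mid s \in G\}$. Therefore the condition ``($\dagger$) holds for all $t \in G \setminus \{e\}$'' is equivalent to the condition ``$t \notin \ra(G)$ for all $t \in G \setminus \{e\}$'', which in turn says exactly that $\ra(G) \cap (G \setminus \{e\}) = \emptyset$, i.e.\ $\ra(G) = \{e\}$, that is, the amenable radical is trivial.

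Next I would invoke the second part of Theorem~\ref{thm:BKKO}, which states that $C^*_\lambda(G)$ has a unique tracial state if and only if $\ra(G)$ is trivial. Chaining these two equivalences together yields immediately that $C^*_\lambda(G)$ has a unique tracial state if and only if ($\dagger$) holds for every $t \in G \setminus \{e\}$, which is precisely the statement of the corollary. The proof is thus a short logical concatenation:
\begin{equation*}
\text{unique trace} \iff \ra(G) = \{e\} \iff \big(\forall\, t \neq e:\ t \notin \ra(G)\big) \iff \big(\forall\, t \neq e:\ (\dagger)\big).
\end{equation*}

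The one point requiring a moment's care, and the only place I would expect any friction, is the translation between ``$\ra(G)$ is trivial'' and ``every non-identity element lies outside $\ra(G)$.'' Since $\ra(G)$ is a subgroup it always contains the identity $e$, so triviality of $\ra(G)$ means precisely that it contains no element other than $e$; this is equivalent to asserting $t \notin \ra(G)$ for each $t \in G \setminus \{e\}$. This is entirely routine, but stating it cleanly keeps the quantifier over $t \in G \setminus \{e\}$ in Theorem~\ref{thm:2} correctly aligned with the global statement about $\ra(G)$ in Theorem~\ref{thm:BKKO}. No genuine obstacle arises, since the analytic content — the use of Furman's theorem, the boundary-action extension argument, and the Hahn--Banach separation via Lemma~\ref{sfc} — has already been absorbed into Theorem~\ref{thm:2}, and the characterization of the unique trace property via the amenable radical is supplied by Theorem~\ref{thm:BKKO}.
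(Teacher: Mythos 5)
Your proposal is correct and matches the paper's own derivation: the paper states the corollary immediately after the sentence ``From Theorem~\ref{thm:2} and Theorem~\ref{thm:BKKO} we obtain the following,'' which is precisely your concatenation of the pointwise equivalence $t \notin \ra(G) \Leftrightarrow (\dagger)$ with the characterization of the unique trace property via triviality of $\ra(G)$. The quantifier bookkeeping you flag is indeed the only (routine) step involved.
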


\noindent  Using Theorem~\ref{thm:KK} (Breuillard--Kalantar--Kennedy--Ozawa), we can characterize $C^*$-simple groups as follows:

\begin{theorem}  \label{smpl}
Let $G$ be a group and let $\tau_0$ denote the canonical tracial state on $C^*_\lambda(G)$. Then the following are equivalent:
    \begin{enumerate}
\item $C^*_\lambda(G)$ is simple,
\item $\tau_0 \in \overline{\{s.\varphi \mid s \in G\}}^{\, w^*}$, for each state $\varphi$ on $C^*_\lambda(G)$,
\item $\tau_0 \in \wscconv\{s.\varphi \mid s \in G\}$,  for each state $\varphi$ on $C^*_\lambda(G)$,
\item $\omega(1) \! \cdot \! \tau_0 \in \wscconv\{s.\omega \mid s \in G\}$,  for each bounded linear func\-tional $\omega$ on $C^*_\lambda(G)$,
\item for all $t_1, t_2,\ldots, t_m \in G \setminus\{e\}$,
\begin{equation*}  0 \in \cconv\big\{\lambda(s)\big(\lambda(t_1) + \lambda(t_2)+\cdots + \lambda(t_m)\big)\lambda(s)^* \mid s \in G\big\},
\end{equation*}
\item for all $t_1, t_2, \ldots, t_m$ in $G \setminus \{e\}$ and all $\ep > 0$, there exist  $s_1, s_2,\ldots, s_n \in  G$ such that
\begin{equation*}
\Bigl\| \sum_{k=1}^n \frac{1}{n} \lambda(s_k t_j s_k^{-1})\Bigr\| < \ep,
\end{equation*}
for $j=1,2, \dots, m$.
\end{enumerate}
\end{theorem}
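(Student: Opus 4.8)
The plan is to prove Theorem~\ref{smpl} by establishing a cycle of implications, using Theorem~\ref{thm:KK} to connect $C^*$-simplicity (condition (i)) to the existence of a free boundary action, and then propagating that through the functional-analytic reformulations (ii)--(iv) and the norm-theoretic conditions (v)--(vi). First I would observe that several of these equivalences are essentially routine dualizations. The equivalence of (ii) and (iii) follows because a weak$^*$-closed convex set containing $\tau_0$ is related to its ordinary weak$^*$-closure via the Hahn--Banach separation theorem; more precisely, (iii) is formally stronger than (ii) but the two coincide once one knows $\tau_0$ can be approximated by convex combinations, which in turn follows from (vi). The passage from (iii) to (iv) amounts to splitting a general bounded functional $\omega$ into its real and imaginary parts and each of those into positive parts via the Jordan decomposition, reducing the statement about arbitrary $\omega$ to the statement about states (after normalizing by $\omega(1)$); the factor $\omega(1)$ appears precisely because $s.\omega$ has the same value at the identity $1$ as $\omega$, and $\tau_0(1)=1$.

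Next I would handle the genuinely new content, which is the link between the analytic conditions and the norm conditions (v)--(vi). The equivalence of (v) and (vi) I expect to be a direct analogue of Lemma~\ref{sfc} and Lemma~\ref{lm:cone}: condition (vi) says that finite averages of conjugates of the fixed element $\lambda(t_1)+\cdots+\lambda(t_m)$ have norm tending to $0$, while (v) says $0$ lies in the norm-closed convex hull of the $G$-conjugates of that element. Since each $t_j \ne e$, the element $y = \lambda(t_1)+\cdots+\lambda(t_m)$ is a finite sum of $\lambda(t)$'s with $t \ne e$, and the cone lemma machinery (Lemma~\ref{lm:cone}) will let me control norms of such sums. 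I would use that $\overline{\mathrm{conv}}$ of the conjugacy orbit of $y$ contains $0$ if and only if arbitrarily small-norm convex combinations exist, which is the content of (vi); here the equal-weight form $\frac1n$ in (vi) versus general convex weights in (v) is harmless because one can approximate rational and then real convex weights by repeating group elements.

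The main obstacle, and the step requiring the deepest input, is the implication connecting the analytic side to simplicity, i.e.\ showing (i) $\Rightarrow$ (ii) (or its contrapositive) and closing the cycle back to (v)/(vi). The plan here is to use Theorem~\ref{thm:KK}: if $G$ is $C^*$-simple, there is a \emph{free} boundary action $G \curvearrowright X$, and I would run an argument parallel to the proof of Theorem~\ref{thm:2}. Given any state $\varphi$ on $C^*_\lambda(G)$, I extend it to a state $\psi$ on $C(X) \rtimes_r G$, restrict to $C(X)$ to get a measure $\rho$, and use strong proximality to find a net $(s_i)$ with $s_i.\rho \to \delta_x$ weak$^*$; passing to a subnet, $s_i.\psi$ converges to a state whose restriction to $C(X)$ is $\delta_x$. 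By Lemma~\ref{lm:multiplicative}, freeness of the action forces this limit state to vanish on $\lambda(t)$ for every $t \ne e$ (since $t.x \ne x$ for all such $t$), so the limit is $\tau_0$ on $C^*_\lambda(G)$. This yields (ii). For the converse direction closing the cycle, I would show (vi) implies simplicity: if every nonzero element of the augmentation-type span can be driven to small norm by averaging conjugates, then no proper closed ideal can contain such elements while staying $G$-invariant, and a standard argument (passing through the conditional expectation onto $C^*_\lambda(G)$ and using that any ideal of a simple-candidate crossed product structure must be trivial) gives simplicity. I anticipate the delicate point is verifying that the limit functionals $\varphi_\pm$ retain the correct norms and restrict to multiples of $\delta_x$, exactly as in Theorem~\ref{thm:2}, so that Lemma~\ref{lm:multiplicative} applies cleanly; getting the normalization bookkeeping right across the Jordan decomposition is where I would be most careful.
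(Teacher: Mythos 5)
Your proposed cycle is broken: you supply (i) $\Rightarrow$ (ii) $\Rightarrow$ (iii) $\Rightarrow$ (iv), the equivalence (v) $\Leftrightarrow$ (vi), and (vi) $\Rightarrow$ (i), but no implication anywhere from the state-space conditions (ii)--(iv) into the norm conditions (v)--(vi). You announce ``closing the cycle back to (v)/(vi)'', but what you actually describe is (vi) $\Rightarrow$ (i), which goes \emph{out of} that pair, not into it. The net effect of your implications is only (v) $\Leftrightarrow$ (vi) $\Rightarrow$ (i) $\Rightarrow$ (ii) $\Rightarrow$ (iii) $\Rightarrow$ (iv); nothing shows that simplicity implies (v), so the theorem is not proved. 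The missing step in the paper is (iv) $\Rightarrow$ (v), done by contraposition: if (v) fails for $t_1,\dots,t_m$, then by Lemma~\ref{lm:cone}, arguing as in Lemma~\ref{sfc}, one gets $0 \notin \cconv\big\{\lambda(s)\big(\sum_{j}(\lambda(t_j)+\lambda(t_j^{-1}))\big)\lambda(s)^* \mid s \in G\big\}$, and Hahn--Banach separation yields a self-adjoint functional $\omega$ of norm $1$ and $c>0$ with $2\re\omega\big(\sum_j \lambda(s t_j s^{-1})\big) \ge c$ for all $s \in G$; consequently every $\rho \in \wscconv\{s.\omega \mid s \in G\}$ satisfies $2\re\rho\big(\sum_j \lambda(t_j)\big) \ge c$, while $\omega(1)\!\cdot\!\tau_0$ vanishes on $\sum_j \lambda(t_j)$, contradicting (iv). Some argument of this kind is indispensable in your plan.

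Second, your (iii) $\Rightarrow$ (iv) is not the routine dualization you describe. Writing $\omega = c_1\varphi_1 - c_2\varphi_2 + i c_3\varphi_3 - i c_4\varphi_4$ with states $\varphi_j$ and applying (iii) to each $\varphi_j$ separately produces four \emph{different} nets of convex combinations of conjugations, whereas an element of $\conv\{s.\omega \mid s \in G\}$ applies one and the same convex combination to all four states simultaneously. The paper handles exactly this point by considering the weak$^*$-closed, convex, $G$-invariant set $\wscconv\{(s.\varphi_1,\dots,s.\varphi_m) \mid s \in G\}$ of $m$-tuples and iterating (iii) coordinate by coordinate, using that $s.\tau_0 = \tau_0$, so coordinates already driven to $\tau_0$ remain there while the next one is improved; without this simultaneity argument your reduction fails. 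Two smaller slips: your claim that ``(iii) is formally stronger than (ii)'' is backwards --- (ii) trivially implies (iii), since the orbit is contained in its weak$^*$-closed convex hull; and in (vi) $\Rightarrow$ (i) the reference to conditional expectations and crossed-product ideals is beside the point --- the standard argument (and the paper's) is that (vi) gives the Dixmier property for $C^*_\lambda(G)$, which together with faithfulness of $\tau_0$ yields simplicity, as in Powers.
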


\begin{proof} (i) $\Rightarrow$ (ii). Let $\varphi$ be a state on $C^*_\lambda(G)$. By Theorem~\ref{thm:KK} (Breuillard--Kalantar--Kennedy--Ozawa) there is a free boundary action $G \curvearrowright X$. Take any $x \in X$.  Extend $\varphi$ to a state $\psi$ on $C(X) \rtimes_r G$ and let $\rho$ be the restriction of $\psi$ to $C(X)$. Since $G \curvearrowright X$ is a boundary action, there is a net $(s_i)$ in $G$ such that $s_i.\rho$ converges to the point-evaluation $\delta_x$ in the weak$^*$-topology. Upon possibly passing to a subnet we may assume that $s_i.\psi$ converges to some state $\psi'$ on $C(X) \rtimes G$. Note that $s_i.\varphi$ converges to the restriction of $\psi'$ to $C^*_\lambda(G)$. 

The restriction of $\psi'$ to $C(X)$ is $\delta_x$, so by Lemma~\ref{lm:multiplicative}, together with the fact that the action of $G$ on $X$ is free, we deduce that $\psi'(\lambda(t)) = 0$, for all $t \in G\setminus\{e\}$. The restriction of $\psi'$ to $C^*_\lambda(G)$ is therefore equal to  $\tau_0$. We conclude that $s_i.\varphi$ converges to $\tau_0$.

It is trivial that (ii) implies (iii).

(iii) $\Rightarrow$ (iv). Fix states $\varphi_1, \varphi_2, \dots, \varphi_m$  on $C^*_\lambda(G)$. The set
$$\wscconv\{(s.\varphi_1,s.\varphi_2, \dots, s.\varphi_m) \mid s \in G\}$$
is a weak$^*$ closed $G$-invariant convex subset of the set of $m$-tuples of the set of states on $C^*_\lambda(G)$. Repeated applications of (iii) show that the $m$-tuple $(\tau_0, \tau_0, \dots, \tau_0)$ belongs to this set. It follows that for each finite subset $F$ of $C^*_\lambda(G)$, and $\ep >0$, there exist $s_1, s_2, \dots, s_n \in G$ such that 
$$\Big| \frac{1}{n} \sum_{k=1}^n s_k.\varphi_j(a) - \tau_0(a)\Big| < \ep,$$
for all $a \in F$ and all $j=1,2, \dots, m$. Since each bounded linear functional is a linear combination of finitely many (in fact, four) states, we see that (iv) holds.

(iv) $\Rightarrow$ (v). Suppose that (v) does not hold, and let $t_1, t_2, \dots, t_m$ be elements in $G \setminus \{e\}$ that witness the failure of (v). Using Lemma~\ref{lm:cone} and arguing as in the proof of Lemma~\ref{sfc}, we conclude that
$$ 0 \notin \cconv\Big\{\lambda(s)\big(\sum_{j=1}^m (\lambda(t_j) + \lambda(t_j^{-1}))\big)\lambda(s)^* \mid s \in G\Big\}.$$

By the Hahn--Banach separation theorem we obtain a self-adjoint linear functional $\omega$ on $C^*_\lambda(G)$ (of norm 1)
    and $c>0$ such that for all $s \in G$,
 \begin{equation*} 
        2 \re \omega\Bigl( \sum_{j=1}^n \lambda(s t_j s^{-1})\Bigr)
            = \omega\Bigl( \sum_{j=1}^n \lambda(s t_j s^{-1})+ \lambda(s
            t_j^{-1} s^{-1}) \Bigr)
            \geq c.
        \end{equation*}
Thus $2\re \rho(\sum_{j=1}^n \lambda(t_j)) \ge c>0$, for all $\rho$ in the weak$^*$-closure of  ${\mathrm{conv}}\{s.\omega \mid s \in G\}$, while $\tau_0(\sum_{j=1}^n \lambda(t_j)) = 0$. This shows that (iv) does not hold. 

(v) $\Rightarrow$ (vi). If (v) holds, then for all $t_1,t_2, \dots, t_m \in G \setminus \{e\}$ and all $\ep >0$, there are $s_1,s_2, \dots, s_n \in G$ (repetitions being allowed) such that
$$\Big\|\sum_{k=1}^n \frac{1}{n} \lambda(s_k) \big(\lambda(t_1) + \lambda(t_2) + \cdots + \lambda(t_m)\big) \lambda(s_k)^*\Big\| < \ep.$$
Now use Lemma~\ref{lm:cone} to conclude that (vi) holds. 

(vi) $\Rightarrow$ (i). It is easy to see that (vi) implies the Dixmier property: for each $a \in C^*_\lambda(G)$, $\cconv \{uau^* \mid u \in C^*_\lambda(G) \; \text{unitary}\}$ meets the scalars (necessarily at $\tau_0(a) \! \cdot \! 1$). 
Since $\tau_0$ is faith\-ful, this is easily seen to imply simplicity (and uniqueness of trace) of $C^*_\lambda(G)$, cf.\  \cite{Pow75}. 
\end{proof}

\section{Summary}

\noindent We end with a summary of existing results combined with  results obtained in this article.

\begin{theorem} \label{thm:A}
Let $G$ be a group and let $t \in G$. The following are equivalent:
\begin{enumerate}
\item $t \notin \ra(G)$,
\item there is a boundary action $G \curvearrowright X$ such that $t$ acts non-trivially on $X$,
\item $\tau(\lambda(t)) = 0$, for all tracial states $\tau$ on $C^*_\lambda(G)$,
\item $0 \in \cconv\{ \lambda(sts^{-1}) \mid s \in G \}.$
\end{enumerate}
\end{theorem}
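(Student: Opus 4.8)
The plan is to recognize that this final theorem is a summary statement: each of its four equivalences has already been established individually in the preceding sections, all of them pivoting on condition (i), namely $t \notin \ra(G)$. So the proof is purely a matter of quoting the relevant earlier results and chaining them through the common node (i). There is no genuinely new content to prove here.

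First I would observe that the equivalence of (i) and (ii) is precisely Furman's Theorem~\ref{thm:Furman}, quoted at the start of Section~\ref{sec:2}: $t \notin \ra(G)$ holds if and only if $G$ admits a boundary action on some compact Hausdorff space $X$ under which $t$ acts non-trivially. Next, the equivalence of (i) and (iii) is exactly the first assertion of Theorem~\ref{thm:BKKO} (Breuillard--Kalantar--Kennedy--Ozawa), which states that $\tau(\lambda(t)) = 0$ for every tracial state $\tau$ on $C^*_\lambda(G)$ if and only if $t \notin \ra(G)$. Finally, the equivalence of (i) and (iv) is the content of Theorem~\ref{thm:2}, which asserts that $t \notin \ra(G)$ if and only if $0 \in \cconv\{\lambda(sts^{-1}) \mid s \in G\}$.

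Since all three of these equivalences have (i) as a common member, I would simply note that they together establish that (i), (ii), (iii), and (iv) are mutually equivalent, completing the proof. Concretely, the cycle (i) $\Leftrightarrow$ (ii), (i) $\Leftrightarrow$ (iii), (i) $\Leftrightarrow$ (iv) suffices, as each condition is equivalent to (i) and hence to every other.

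As for the main obstacle: there is essentially none at this stage, because all the work has been done earlier. The genuinely substantive step was the converse direction of Theorem~\ref{thm:2} (that failure of $(\dagger)$ forces $t \in \ra(G)$), which required extending the separating self-adjoint functional from Lemma~\ref{sfc} to the crossed product $C(X) \rtimes_r G$, decomposing it via its Jordan decomposition, and pushing the pieces to point-evaluation at $x$ along a net arising from the boundary action, so as to invoke Lemma~\ref{lm:multiplicative}. Having assumed that result, the only care needed here is to state the three citations accurately and to confirm that condition (i) really is the shared pivot linking the analytic condition (iv), the dynamical condition (ii), and the trace condition (iii).
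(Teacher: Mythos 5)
Your proposal is correct and matches the paper's own treatment: Theorem~\ref{thm:A} is indeed proved there purely by citation, invoking Furman's Theorem~\ref{thm:Furman} for (i)$\Leftrightarrow$(ii), Theorem~\ref{thm:BKKO} for the trace condition, and Theorem~\ref{thm:2} for the convex-hull condition, exactly as you do. The only cosmetic difference is that the paper chains the conditions linearly ((i)$\Leftrightarrow$(ii), (ii)$\Leftrightarrow$(iii), (iii)$\Leftrightarrow$(iv)) while you pivot everything through (i); this is immaterial.
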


\noindent 
The equivalence of (i) and (ii) is \cite[Proposition 7]{Furman-2003} by Furman (see Theorem~\ref{thm:Furman} above), the equivalence of (ii) and (iii) is \cite[Theorem 4.1]{BKKO} (quoted and reproved here as Theorem~\ref{thm:BKKO}), and the equivalence between (iii) and (iv) is  Theorem~\ref{thm:2} above.

\begin{theorem} \label{thm:B}
Let $G$ be a group. The following are equivalent:
\begin{enumerate}
\item $C^*_\lambda(G)$ has a unique tracial state,
\item $G$ admits a faithful boundary action,
\item $\ra(G) = \{e\}$,
\item for all $t \in G \setminus \{e\}$ and all $\ep>0$, there exist $s_1, s_2, \dots, s_n \in G$ such that 
$$\Big\| \frac{1}{n} \sum_{k=1}^n \lambda(s_kts_k^{-1}) \Big\| < \ep.$$
\end{enumerate}
\end{theorem}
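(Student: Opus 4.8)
The plan is to prove Theorem~\ref{thm:B} by assembling the equivalences already established in the paper, since all four conditions have essentially been characterized through the preceding theorems. The equivalence (i) $\Leftrightarrow$ (iii) is precisely the final assertion of Theorem~\ref{thm:BKKO}, so I would simply cite it. For the equivalence (i) $\Leftrightarrow$ (iv), I would reduce to the element-wise statement of Theorem~\ref{thm:2} combined with Corollary~\ref{cor:unique_trace-2}: condition (iv) is the quantitative, norm-form reformulation of the convex-hull condition ($\dagger$) holding for every $t \in G \setminus \{e\}$. The passage from ``$0 \in \cconv\{\lambda(sts^{-1}) \mid s \in G\}$'' to the existence of finitely many $s_k$ with $\|\frac{1}{n}\sum_k \lambda(s_k t s_k^{-1})\| < \ep$ is just unwinding the definition of the closed convex hull (elements of the convex hull are finite convex combinations, and closure gives the $\ep$-approximation), while uniform weights $1/n$ are obtained by repeating indices to clear denominators, exactly the device used in the proof of Theorem~\ref{smpl}(v)$\Rightarrow$(vi).

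The remaining content is the equivalence of (ii) with the other conditions. Here I would use Theorem~\ref{thm:Furman} (Furman): condition (i), that $\ra(G) = \{e\}$, says exactly that no nontrivial element lies in the amenable radical, and by Furman's theorem each such $t \ne e$ admits a boundary action on which it acts nontrivially. The subtlety is that (ii) asks for a \emph{single} faithful boundary action, not one action per element. To bridge this gap I would invoke the universal boundary $\partial_F G$ of $G$, whose action is a boundary action that dominates all others, so that an element acts trivially on $\partial_F G$ if and only if it acts trivially on every boundary action; combined with Furman's theorem this yields that the action on $\partial_F G$ is faithful precisely when $\ra(G) = \{e\}$. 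This gives (i) $\Leftrightarrow$ (ii). Alternatively, the equivalence of (ii) and (iii) can be read off as the unique-trace analogue of the argument already carried out for simplicity in Theorem~\ref{smpl}(i)$\Rightarrow$(ii).

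I expect the main obstacle to be the ``one action versus many actions'' issue in handling (ii): Furman's theorem produces, for each individual non-radical element, a possibly different boundary action, whereas faithfulness of a single action requires simultaneously detecting all nontrivial elements. The clean resolution is the universality of $\partial_F G$, and I would state and use that property (as was implicitly done in Section~\ref{sec:2} via the Kalantar--Kennedy and Breuillard--Kalantar--Kennedy--Ozawa results). Everything else is bookkeeping: (i) $\Leftrightarrow$ (iii) is Theorem~\ref{thm:BKKO}, (i) $\Leftrightarrow$ (iv) follows by combining Corollary~\ref{cor:unique_trace-2} with the routine translation between closed-convex-hull membership and uniform-weight norm estimates. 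The one genuine care point is making sure the quantitative form in (iv) is logically equivalent to, and not merely implied by, the convex-hull condition, which follows since the uniform average $\frac{1}{n}\sum_k \lambda(s_k t s_k^{-1})$ is itself a particular element of $\cconv\{\lambda(sts^{-1}) \mid s \in G\}$, so small norm for such averages and $0$ being in the closed convex hull are two descriptions of the same phenomenon.
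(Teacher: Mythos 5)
Your proposal is correct and follows essentially the same route as the paper: the paper likewise assembles Theorem~\ref{thm:BKKO} for (i)$\Leftrightarrow$(iii), Corollary~\ref{cor:unique_trace-2} together with the routine translation between closed-convex-hull membership and uniform-weight $\ep$-estimates for the equivalence with (iv), and Furman's theorem combined with universality of the Furstenberg boundary for (ii)$\Leftrightarrow$(iii). The only blemish is a labeling slip where you refer to ``condition (i), that $\ra(G)=\{e\}$''---that is condition (iii), not (i).
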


\noindent By universality of the Furstenberg boundary, it follows from Furman's result (cf.\ the equivalence of (i) and (ii) in Theorem~\ref{thm:A} above) that $G$ acts faithfully on its Furstenberg boundary if and only if the amenable radical is trivial. Hence (ii) and (iii) are equivalent.

The equivalence between (i) and (iii) is \cite[Corollary 4.2]{BKKO} (quoted and reproved here as Theorem~\ref{thm:BKKO}), while the equivalence between (iii) and (iv) is Corollary~\ref{cor:unique_trace-2} above.

\begin{theorem} \label{thm:C}
Let $G$ be a group, and let $\tau_0$ be the canonical tracial state on $C^*_\lambda(G)$. The following are equivalent:
\begin{enumerate}
\item $C^*_\lambda(G)$ is simple,
\item $G$ admits a free boundary action,
\item $\tau_0 \in \overline{\{s.\varphi \mid s \in G\}}^{\, w^*}$, for each state $\varphi$ on $C^*_\lambda(G)$,
\item $\tau_0 \in \wscconv\{s.\varphi \mid s \in G\}$, for each state $\varphi$ on $C^*_\lambda(G)$,
\item for all $t_1, t_2, \dots, t_m \in G \setminus \{e\}$ and  all $\ep>0$, there exist $s_1, s_2, \dots, s_n \in G$ such that 
$$\Big\| \frac{1}{n} \sum_{k=1}^n \lambda(s_kt_js_k^{-1}) \Big\| < \ep,$$
for $j=1,2, \dots, m$,
\item $C^*_\lambda(G)$ has the Dixmier property, i.e., 
$\cconv\big\{uau^* \mid u \in C^*_\lambda(G) \; \text{unitary}\big\} \cap \C \! \cdot \! 1 \ne \emptyset$,
for all $a \in C^*_\lambda(G)$. 
\end{enumerate}
\end{theorem}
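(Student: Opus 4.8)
The plan is to prove Theorem~\ref{thm:C} by establishing a cycle of implications, leaning heavily on the characterizations already assembled for the unique-trace case and on Theorem~\ref{thm:KK} for the boundary-action input. Since the equivalence $(i) \Leftrightarrow (ii)$ is exactly Theorem~\ref{thm:KK} (Breuillard--Kalantar--Kennedy--Ozawa), I would treat that as given and concentrate on weaving the analytic conditions $(iii)$--$(vi)$ into the loop. The natural route is the cycle $(i) \Rightarrow (iii) \Rightarrow (iv) \Rightarrow (v) \Rightarrow (vi) \Rightarrow (i)$, which is precisely the structure carried out for the closely related Theorem~\ref{smpl}; indeed $(iii),(iv),(v),(vi)$ of Theorem~\ref{smpl} match conditions $(iii),(iv),(vi),(v)$ here up to relabeling, so most of the work is already done and I would invoke it.

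First I would prove $(i) \Rightarrow (iii)$ using a free boundary action, exactly as in the proof of $(i) \Rightarrow (ii)$ of Theorem~\ref{smpl}: extend an arbitrary state $\varphi$ to a state $\psi$ on $C(X) \rtimes_r G$, push the restriction $\rho = \psi|_{C(X)}$ toward a point mass $\delta_x$ along a net $(s_i)$ coming from strong proximality, pass to a convergent subnet of $s_i.\psi$, and use Lemma~\ref{lm:multiplicative} together with freeness of the action to see that the weak$^*$-limit restricts to $\tau_0$ on $C^*_\lambda(G)$. This gives the stronger clustering statement $\tau_0 \in \overline{\{s.\varphi\}}^{\,w^*}$, which trivially yields membership in $\wscconv\{s.\varphi\}$, so $(iii)$ holds (and in fact condition $(iii)$ of this theorem is weaker, so I only need the convex-closure version). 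The passage $(iii) \Rightarrow (v)$ I would handle by the same vectorial Hahn--Banach argument used for $(iii) \Rightarrow (iv) \Rightarrow (v)$ in Theorem~\ref{smpl}: form the $G$-invariant weak$^*$-closed convex set of $m$-tuples $\wscconv\{(s.\varphi_1,\dots,s.\varphi_m)\}$, apply $(iii)$ repeatedly to place $(\tau_0,\dots,\tau_0)$ in it, translate convex membership into an $\ep$-approximation over finite test sets, and then specialize the test functionals to extract the operator-norm smallness of the averaged conjugates.

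Next, $(v) \Rightarrow (vi)$ is the content of the Dixmier-property deduction: from the norm estimates on the individual averages $\|\tfrac1n \sum_k \lambda(s_k t_j s_k^{-1})\| < \ep$ for each $j$, I would assemble, via a single common choice of $s_1,\dots,s_n$ (obtained exactly as in $(v) \Rightarrow (vi)$ of Theorem~\ref{smpl}), a unitary averaging of an arbitrary element $a$ bringing it close to its trace $\tau_0(a) \cdot 1$; using that the $\lambda(t)$ with $t \neq e$ span a dense set of the kernel of $\tau_0$ and invoking Lemma~\ref{lm:cone} to control norms of positive combinations, the convex hull $\cconv\{uau^*\}$ meets the scalars. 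Finally $(vi) \Rightarrow (i)$ is the classical observation, going back to Powers~\cite{Pow75}, that the Dixmier property together with faithfulness of $\tau_0$ forces simplicity: any nonzero ideal would contain a self-adjoint $a$ whose Dixmier average equals $\tau_0(a)\cdot 1$ with $\tau_0(a) > 0$, making $1$ lie in the ideal.

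I expect the genuine obstacle to be the step $(i) \Rightarrow (iii)$, since it is the only place where the external input (a free boundary action from Theorem~\ref{thm:KK}) is converted into an intrinsic analytic statement, and care is needed in the order of limits: one must first drive $\rho$ to $\delta_x$, then extract a weak$^*$-convergent subnet of the full states $s_i.\psi$, and only then apply Lemma~\ref{lm:multiplicative} after checking that the limit state restricts to $\delta_x$ on $C(X)$ so that $C(X)$ lands in its multiplicative domain. The remaining implications are either formal convexity/Hahn--Banach manipulations or the standard Dixmier-to-simplicity argument, all of which parallel Theorem~\ref{smpl} closely enough that I would present them briefly and refer back rather than reprove them in full.
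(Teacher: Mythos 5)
Your proposal is correct and takes essentially the same route as the paper, which proves Theorem~\ref{thm:C} simply by citing Theorem~\ref{thm:KK} for (i)$\Leftrightarrow$(ii) and deferring all remaining implications to Theorem~\ref{smpl} and its proof --- exactly the cycle you reconstruct. The only slip is in your dictionary between the two theorems: conditions (iii), (iv), (v) here correspond to (ii), (iii), (vi) of Theorem~\ref{smpl} respectively (so (iii) here is the \emph{stronger}, not the weaker, of the two clustering statements), and (vi) here is the Dixmier property that the paper extracts inside the step (vi)$\Rightarrow$(i) of Theorem~\ref{smpl}; this mislabeling does not affect the substance of your argument.
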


\noindent The equivalence of (i) and (ii) is stated as Theorem~\ref{thm:KK} above, and was proven by Breuillard--Kalantar--Kennedy--Ozawa. The remaining implications are contained in Theorem~\ref{smpl} (and its proof) above. 

It is worth noting that in (vi) one can even take the unitaries $u$ in $C^*_\lambda(G)$ to be in the set $\{\lambda(t) \mid t \in G\}$. It was shown in \cite{HaaZsido}  that the Dixmier property holds for any unital simple \Cs{} with at most one tracial state. Conversely, any unital \Cs{} satisfying the Dixmier property can have at most one tracial state; moreover, it is simple if, in addition, it has a faithful trace.

It was shown in \cite{BKKO} as a corollary to the characterization of groups with the unique trace property therein, that simplicity of $C^*_\lambda(G)$ implies that $C^*_\lambda(G)$ has unique tracial state. This implication also follows in several different ways from the results obtained in this article. For instance, since $s.\tau = \tau$ for any tracial state $\tau$ on $C^*_\lambda(G)$ and every $s \in G$, the (equivalent) statements (iii) and (iv) in Theorem~\ref{thm:C}  both imply uniqueness of the trace. Respectively, the fact that  Theorem~\ref{thm:C}~(v) clearly implies Theorem~\ref{thm:B}~(iv), yields yet another proof.

Finally, note that the equivalent conditions in Theorem~\ref{thm:C} are strictly stronger than those in Theorem~\ref{thm:A}, due to the very recent results of Le Boudec, \cite{Boudec:C*-simple},  showing that $C^*$-simplicity is not equivalent to the unique trace property.

\noindent 

{\small{
\bibliographystyle{amsplain}
\providecommand{\bysame}{\leavevmode\hbox to3em{\hrulefill}\thinspace}
\providecommand{\MR}{\relax\ifhmode\unskip\space\fi MR }
\providecommand{\MRhref}[2]{%
  \href{http://www.ams.org/mathscinet-getitem?mr=#1}{#2}
}
\providecommand{\href}[2]{#2}

}

\vspace{1cm} \noindent 
Department of Mathematics and Computer Science, University of Southern Denmark, Campusvej 55, 5230 Odense M, Denmark.

\end{document}